\title{Generic 1-connectivity of flag domains\\ in Hermitian symmetric spaces}                                 
\author{Tatsuki Hayama}                 
\subjclass[2010]{14M15; 32M05; 57S20}   
\keywords{flag domain; Hermitian symmetric space; Weyl group}         
\address{%
Tatsuki Hayama\\               
School of Business Administration\\
Senshu University\\
Higashimita 2-1-1\\
Tama, Kawasaki, Kanagawa 214-8580\\
Japan\\            
hayama@isc.senshu-u.ac.jp              
}
\theoremstyle{plain}
\numberwithin{equation}{section}
\newtheorem{Theorem}{Theorem}[section]
\newtheorem{Proposition}[Theorem]{Proposition}
\newtheorem{Corollary}[Theorem]{Corollary}
\theoremstyle{definition}
\newtheorem{Definition}[Theorem]{Definition}
\newtheorem{Remark}[Theorem]{Remark}
\def\rank{\mathop{\mathrm{rank}}\nolimits}
\def\dim{\mathop{\mathrm{dim}}\nolimits}
\def\bd{\mathop{\mathrm{bd}}\nolimits}
\def\sgn{\mathop{\mathrm{sgn}}\nolimits}
\def\bd{\mathop{\mathrm{bd}}\nolimits}
\def\cl{\mathop{\mathrm{c\ell}}\nolimits}
\def\Span{\mathop{\mathrm{Span}}\nolimits}
\def\diag{\mathop{\mathrm{diag}}\nolimits}
\def\CC{\mathbb{C}}
\def\RR{\mathbb{R}}
\def\calF{\mathcal{F}}
\def\calO{\mathcal{O}}
\def\fraka{\mathfrak{a}}
\def\frakb{\mathfrak{b}}
\def\frakg{\mathfrak{g}}
\def\frakk{\mathfrak{k}}
\def\frakt{\mathfrak{t}}
\def\frakq{\mathfrak{q}}
\def\frakp{\mathfrak{p}}
\def\frakh{\mathfrak{h}}
\def\fraku{\mathfrak{u}}
\newcommand{\mr}[1]{{\mathrm{#1}}}
\begin{document}


\maketitle

\begin{abstract}
    A flag domain is an open real group orbit in a complex flag manifold.
    It has been shown that a flag domain is either pseudoconvex or pseudoconcave.
    Moreover, generically $1$-connected flag domains are pseudoconcave.
    In this study, for flag domains contained in irreducible Hermitian symmetric spaces of type \textit{AIII} or \textit{CI}, we determine which pseudoconcave flag domain is generically $1$-connected.
\end{abstract}
\section{Introduction}
Let $G$ be a connected complex semisimple Lie group, and let $G_0$ be a real form of $G$.
An open $G_0$-orbit in a $G$-flag manifold is called a flag domain.
For example, a Hermitian symmetric domain is a flag domain.
According to \cite{HongEtAl18,HayamaEtAl19}, a flag domain is either pseudoconvex or pseudoconcave.
A pseudoconvex flag domain, such as a Hermitian symmetric domain, possesses plenty of global functions.
In contrast, any global function on a pseudoconcave flag domain is constant.
In this study, we investigate pseudoconcave flag domains, focusing on {\it generic $1$-connectivity}.

Let $K_0$ be a maximally compact subgroup of $G_0$.
By the Matsuki duality \cite{Matsuki79}, $G_0$-orbits correspond to $K$-orbits with complexification $K$ of $K_0$.
Through this correspondence, a flag domain $D$ contains a compact submanifold called the base cycle of $D$.
The base cycle and its $G$-transformations play an important role in the study of  pseudoconcave flag domains.
Let us choose a base point $z$ in the base cycle.
The isotropy subgroup $Q_z$ at $z$ is a parabolic subgroup, and we have a unique open $Q_z$-orbit in the ambient flag manifold $G(z)$.
We say that $D$ is generically $1$-connected if the open $Q_z$-orbit intersects with the base cycle.
Huckleberry \cite{Huckleberry10a} showed that a flag domain is pseudoconcave if it is generically $1$-connected.
He also showed that $D$ is generically $1$-connected if $K$ is a simple Lie group.
For example, all $SL(n,\RR)$-flag domains are generically $1$-connected.
However, the following problem is still open: {\it are all pseudoconcave flag domains generically $1$-connected?}
In this study, we provide an answer for flag domains contained in irreducible compact Hermitian symmetric spaces of type \textit{AIII} or \textit{CI}.

All flag domains contained in the Hermitian symmetric space under consideration correspond to the signature, and our results indicate that generic $1$-connectivity depends on the numerical condition of the signature.
In the case of type \textit{CI}, where $G_0=Sp(2n,\RR)$, few flag domains are generically $1$-connected: the Hermitian symmetric space contains $(n+1)$ flag domains, of which $(n-1)$ are pseudoconcave, and at most one of them is generically $1$-connected.
In contrast, in the case of type \textit{AIII}, where $G_0=SU(p,q)$, more than one flag domain can be generically $1$-connected: almost half of the flag domains under consideration are generically $1$-connected if $2p<q$.
We prove these by using combinatorics of the Weyl groups and their action on the roots.
Moreover, we consider the generic $1$-connectivity of a certain type of flag domain fibered over the flag domains in the Hermitian symmetric spaces.

\section*{Acknowledgement}
The author would like to thank H. Ochiai and S. Kaji  for their helpful discussions. 
The author is also grateful to the referee for valuable comments on simplifying the proofs of our results. 
This work was partially supported by JSPS KAKENHI (Grant Number 20K03540).

\section{Cycle Connectivity of Flag Domains}
In this section, we review pseudoconcavity, cycle connectivity, and generic $1$-connectivity.
Subsequently, we present combinatorial conditions that are equivalent to generic $1$-connectivity.
\subsection{Pseudoconcavity}
Let $X$ be a connected complex manifold.
Andreotti \cite{Andreotti74} defined pseudoconcavity as follows:
\begin{Definition}\label{a_pcc}
   $X$ is pseudoconcave if we can find a relatively compact open subset $Y\subset X$ such that at every point $z\in \bd{(Y)}$, a holomorphic map $\rho$ on the unit disk $\mathbb{D}$ to $\cl{(Y)}$ satisfying $\rho(0)=z$ and $\bd{(\rho(\mathbb{D}))}\subset Y$ exists.
\end{Definition}
This definition is weaker than the definition of \textit{$q$-pseudoconcavity} in \cite{AndreottiEtAl62}, where a smooth exhaustion is required for the definition.
Similar to the finiteness theorem of \cite{AndreottiEtAl62} for higher cohomologies of $q$-pseudoconcave manifolds, we have a weaker version of the finiteness theorem:
\begin{Proposition}[\cite{Andreotti74}]\label{An}
  If $X$ is pseudoconcave, then any global function on $X$ is constant, and $\dim_{\CC}H^0(X,\calF)<\infty$ for any coherent sheaf $\calF$.
\end{Proposition}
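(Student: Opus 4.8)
The plan is to deduce the first assertion from the maximum principle together with the disks of Definition~\ref{a_pcc}, and to obtain the second from the functional--analytic finiteness machinery behind \cite{AndreottiEtAl62} and \cite{Andreotti74}.

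\emph{Constancy of global functions.} Let $f\in H^0(X,\calO_X)$. As $\cl(Y)$ is compact, $|f|$ attains its maximum over $\cl(Y)$, say at $z_0$, and $M:=\{z\in\cl(Y):|f(z)|=|f(z_0)|\}$ is a nonempty closed set. If $M$ meets $Y$, then $|f|$ has a local maximum at an interior point of $Y$, so by the maximum principle $f$ is constant near that point and, $X$ being connected, constant on $X$. Otherwise $M\subset\bd(Y)$; pick $z_0\in M$ and a holomorphic $\rho\colon\DD\to\cl(Y)$ with $\rho(0)=z_0$ and $\bd(\rho(\DD))\subset Y$. Since $\rho(\DD)\subset\cl(Y)$, the function $f\circ\rho$ attains its maximum modulus at the interior point $0\in\DD$, hence is constant; thus $\rho(\DD)\subset M$, so $\overline{\rho(\DD)}\subset M$ and therefore $\bd(\rho(\DD))\subset M\cap Y=\varnothing$. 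A subset of the connected space $X$ with empty boundary is empty or all of $X$, and $z_0\in\rho(\DD)$, so $X=\rho(\DD)\subset\cl(Y)$ is compact; then $f$ is again constant by the maximum principle. In every case $f$ is constant, and in particular $\dim_{\CC}H^0(X,\calO_X)=1$.

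\emph{Finiteness for coherent $\calF$.} Topologise $H^0(U,\calF)$, for $U\subset X$ open, as a Fr\'echet space with the topology of uniform convergence of local representatives on compact sets. By the standard d\'evissage of $\calF$ into successive quotients of the form $(j_Z)_*\mathcal{G}$, where $j_Z\colon Z\hookrightarrow X$ is a reduced irreducible closed analytic subset and $\mathcal{G}$ a coherent torsion--free $\calO_Z$--module, and by left exactness of $H^0$, it suffices to bound $\dim_{\CC}H^0(Z,\mathcal{G})$ for such sheaves; for them restriction to any nonempty open subset is injective. Fixing relatively compact opens $\cl(Y)\subset\Omega\Subset\Omega'\Subset X$, the disks of Definition~\ref{a_pcc} yield, through the maximum principle, Schwarz--lemma type a priori estimates controlling a section defined on all of $X$ by its restriction to $\Omega$. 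Combined with the compactness of the restriction $H^0(\Omega',\calF)\to H^0(\Omega,\calF)$ given by Montel's theorem, these estimates place the situation within the scope of L.~Schwartz's finiteness theorem (a compact perturbation of a surjection between Fr\'echet spaces has finite--dimensional cokernel), which then yields $\dim_{\CC}H^0(X,\calF)<\infty$.

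\emph{Main obstacle.} The delicate point is the bumping step: turning the bare existence of one disk at each point of $\bd(Y)$ into estimates sharp enough to verify the hypotheses of L.~Schwartz's theorem, and checking their compatibility with the d\'evissage when the subvariety $Z$ fails to be compact. That all of this goes through under Andreotti's weak hypothesis is precisely the content of \cite{Andreotti74}; the maximum--principle argument for the first assertion is the elementary half.
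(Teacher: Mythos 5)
The paper does not actually prove this Proposition: it is quoted from \cite{Andreotti74}, with only the remark that the maximum principle is the essential tool, so there is no internal argument to compare against. Your first half is a correct and complete proof of the constancy statement, and it is exactly the kind of maximum-principle argument the paper alludes to: the case distinction on where $|f|$ attains its maximum on $\cl(Y)$, the use of the disk $\rho$ at a boundary maximum to propagate the maximum set, and the clopen/connectedness conclusion all check out (in the boundary case one either reaches a contradiction or finds $X$ compact, and constancy follows either way). Your second half, however, is only a programme: the d\'evissage, the Fr\'echet topologies, Montel, and L.~Schwartz's finiteness theorem are named but the decisive estimates are not produced, and you explicitly defer them to \cite{Andreotti74}. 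That matches the paper's own treatment (a citation), but it means the proposal is not a self-contained proof of the second assertion; note also that the bumping-plus-Schwartz route you sketch is really the Andreotti--Grauert machinery of \cite{AndreottiEtAl62} for $q$-pseudoconcave manifolds, whereas under the weak Definition~\ref{a_pcc} Andreotti's argument proceeds more directly via Schwarz-lemma estimates along the disks and a Siegel-type comparison bounding a section on $X$ by finitely many of its jets, so if you intend to fill in the details you should follow that line rather than the functional-analytic one.
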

To prove this finiteness theorem, the maximum principle works essentially.

\begin{Remark}
  Higher cohomologies of a pseudoconcave flag domain have a significant meaning in several aspects.
  In Hodge theory, Green et al. \cite{GreenEtAl13} studied them with specific Mumford-Tate domains in connection with automorphic cohomology.
  In representation theory, higher cohomologies give a geometric realization of Zuckerman derived functor modules, see \cite{Kobayashi98a} and references therein.
\end{Remark}

\subsection{Cycle connectivity}
Let $G$ be a connected complex Lie group.
For a $G$-flag manifold $Z$, we fix a base point $z\in Z$.
Then, $Z\cong G/Q_z\cong G(z)$, where $Q_z$ is the parabolic subgroup that stabilizes $z$.
Let $\frakg_0$ be a real form of the Lie algebra $\frakg$ of $G$, and let $\tau$ be the associated complex conjugation.
The $\tau$-invariant complex subspace $\frakq_z\cap \tau\frakq_z$ contains a $\tau$-stable Cartan subalgebra $\frakh$, where $\frakq_z$ is the Lie algebra of $Q_z$.
For a $\frakh$-root system $\Sigma$ of $\frakg$, we choose a positive root system $\Sigma^+$ such that $\frakq_z$ contains the Borel subalgebra $\frakb=\frakh\oplus\bigoplus_{\alpha\in\Sigma^+} \frakg_\alpha$.
Let $\Psi$ be the simple root system corresponding to $\Sigma^+$.
For a subset $\Phi\subset \Psi$, we define
\begin{align*}
  &\Phi_{\mr{r}}=\{
    \sum_{\psi\in\Psi} \epsilon_{\psi}\psi\in \Sigma \mid \epsilon_{\psi}= 0 \text{ whenever }\psi\not\in \Phi\}\\
  &\Phi^{\pm}_{\mr{n}}=\{\alpha\in \pm\Sigma^+\mid \alpha\not\in \Phi_{\mr{r}}\}.
\end{align*}
We may choose $\Phi$ such that  $\Sigma(\frakq_z)=\{\alpha\in \Sigma\; |\; \frakg_{\alpha}\subset \frakq_z \}=\Phi_{\rm{r}}\cup\Phi_{\rm{n}}^+$.
Here $\Phi_{\rm{r}}$ (resp. $\Phi^{+}_{\rm{n}}$) is reductive (resp. nilpotent) part of $\Sigma(\frakq_z)$.

Let $G_0$ be the real form of $G$ corresponding to $\frakg_0$.
By \cite{Aomoto66,Wolf69}, $G_0$-orbits in $Z$ are finitely many, and there is an open orbit.
An open $G_0$-orbit is called a flag domain.
Suppose that $D=G_0(z)$ is a flag domain.
Let $\theta$ be a Cartan involution that commutes with $\tau$.
Then, we may assume that $\frakh$ and $\Sigma^+$ satisfy the following conditions (see \cite[Theorem 4.5]{Wolf69}):
\begin{itemize}
  \item $\frakh_0=\frakh\cap\frakg_0$ is a $\theta$-stable maximally compact Cartan subalgebra of $\frakg_0$;
  \item $\tau \Sigma^+=-\Sigma^+$.
\end{itemize}
For the compact subgroup $K_0=G_0^{\theta}$ and its complexification $K$, $K_0$-orbit $K_0(z)$ coincides with the $K$-orbit, and $C_0=K_0(z)=K(z)$ is a complex compact manifold (see \cite[Theorem 4.3.1]{FelsEtAl06}).
Here $C_0$ is called the base cycle.

For any point $x,y\in D$, we write $x\sim y$ if there exists $C_i=g_i(C_0)\subset D$ with $g_i\in G$ such that $x\in C_1$ and $y\in C_N$, where the chain $C_1\cup\cdots \cup C_N$ is connected.
The relation $\sim$ is an equivalence relation, and $D/{\sim}$ is classified into two types:

\begin{Proposition}[\cite{Huckleberry10a}]\label{cycle}
  $D/{\sim}$ is either a Hermitian symmetric domain or a point.
  In the former case, $D$ is pseudoconvex.
  In the latter case, we say $D$ is {\it cycle connected}.
\end{Proposition}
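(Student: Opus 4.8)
The plan is to realise $D/{\sim}$ as a flag domain all of whose base cycles are single points. Once that is done, the dichotomy follows from a classification of the closed subgroups of $G_0$ lying between $K_0$ and $G_0$, and pseudoconvexity in the non-trivial case comes from pulling back a Stein exhaustion along a proper fibration.

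The formal part is quick. That $\sim$ is an equivalence relation is immediate (concatenate connected chains of cycles), and since $G_0$ carries such a chain in $D$ to another one it acts transitively on $D/{\sim}$; as the base cycle $C_0=K(z)$ is connected it lies in a single $\sim$-class, so the $G_0$-stabiliser $\tilde V_0$ of the class $[z]$ contains $K_0$ (as well as $V_0:=G_0\cap Q_z$). For the complex structure I would pass to the parabolic $Q_1\subset G$ generated by $Q_z$ and $K$: this is parabolic because it contains the Borel subgroup $B\subset Q_z$, and $\frakq_1$ is the smallest parabolic subalgebra containing $\frakq_z$ and $\frakk$. Let $\mu\colon Z=G/Q_z\to Z_1:=G/Q_1$ be the induced $G$-equivariant fibration; its fibres are compact, and the one through $z$, namely $Q_1/Q_z$, contains $C_0=K(z)$ since $K\subset Q_1$. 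Restricting, $\mu|_D\colon D\to D_1:=\mu(D)$ is holomorphic and $G_0$-equivariant, $D_1$ is open (fibrations are open) hence an open $G_0$-orbit, i.e.\ a flag domain in $Z_1$, and with $W_0:=G_0\cap Q_1\supseteq K_0$ the stabiliser of $\mu(z)$ one has $D_1=G_0/W_0$, whose base cycle $K/(K\cap Q_1)$ is a single point.

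The heart of the argument is the identification $D/{\sim}=D_1$. One inclusion is formal: a cycle $gC_0\subset D$ is collapsed by $\mu$ to the point $g\,\mu(z)$, so a connected chain of cycles stays inside a single $\mu$-fibre, whence $x\sim y\Rightarrow\mu(x)=\mu(y)$. For the converse I must show each nonempty slice $\mu^{-1}(p)\cap D$ is cycle connected; up to the $G_0$-action this slice is $E_z:=\mu^{-1}(\mu(z))\cap D$, and a short computation gives $E_z=W_0/V_0$. Now I would invoke the structure theory of the next paragraph: it forces $W_0=K_0$ or $W_0=G_0$. If $W_0=K_0$, then $E_z=K_0/(K_0\cap Q_z)=K_0(z)=C_0$ is a single cycle and there is nothing to prove. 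If $W_0=G_0$ — equivalently $Q_1=G$, i.e.\ no proper parabolic contains both $\frakq_z$ and $\frakk$ — then $E_z=D$ and the claim becomes that $D$ itself is cycle connected. \emph{This last case is where I expect essentially all the difficulty to lie.} I would attack it through the Schubert‑slice machinery for cycle spaces: cycle connectivity of $D$ can be tested on a single slice $\mathcal O\cap D$, where $\mathcal O$ is the open orbit of an Iwasawa–Borel subgroup, and one controls the $G$‑orbit of the base cycle through the incidence geometry of cycles and Schubert varieties by a dimension/covering estimate, the upshot being that the slice connects all of $D$ precisely when $\frakq_z$ and $\frakk$ generate $\frakg$.

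Finally the structure theory and pseudoconvexity. A closed subgroup between $K_0$ and $G_0$ has Lie algebra $\frakk_0\oplus\mathfrak{m}_0$ with $\mathfrak{m}_0\subset\frakp_0$ an $\Ad(K_0)$-submodule (the relation $[\mathfrak{m}_0,\mathfrak{m}_0]\subset\frakk_0$ being automatic); decomposing $\frakg_0=\bigoplus_i\frakg_0^{(i)}$ into simple ideals with $\frakg_0^{(i)}=\frakk_0^{(i)}\oplus\frakp_0^{(i)}$, each $\frakp_0^{(i)}$ is $\frakk_0^{(i)}$-irreducible or zero, so $\mathfrak{m}_0$ is a partial direct sum $\bigoplus_{i\in S}\frakp_0^{(i)}$ and hence $D_1=\prod_{i\notin S}G_0^{(i)}/K_0^{(i)}$. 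Since $D_1$ is a complex manifold, each factor $G_0^{(i)}/K_0^{(i)}$ is complex, so each such $\frakg_0^{(i)}$ is of Hermitian type and $D_1$ is a product of bounded symmetric domains — a Hermitian symmetric domain — or, when $S$ exhausts the non-compact ideals, a point. This is the asserted dichotomy. In the former case $W_0=K_0$, so the fibres of $\mu|_D\colon D\to D_1$ are the compact cycles $g_0C_0$; thus $\mu|_D$ is proper, and since a bounded symmetric domain is Stein it carries a smooth strictly plurisubharmonic exhaustion $\rho$, whence $\rho\circ\mu|_D$ is a plurisubharmonic exhaustion of $D$ and $D$ is pseudoconvex.
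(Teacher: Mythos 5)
There is a genuine gap, and it sits exactly where you flag it. Note first that the paper offers no proof of this proposition at all --- it is quoted from \cite{Huckleberry10a} --- so your proposal has to stand on its own. Your reduction framework is sound and is essentially the standard one: $\sim$ collapses along the $G$-equivariant fibration $\mu\colon G/Q_z\to G/Q_1$, where $Q_1\supset\langle Q_z,K\rangle$ is parabolic because it contains a Borel; the slice over $\mu(z)$ is $W_0(z)\cong W_0/V_0$ with $W_0=G_0\cap Q_1\supset K_0$; subalgebras between $\frakk_0$ and $\frakg_0$ are of the form $\frakk_0\oplus\bigoplus_{i\in S}\frakp_0^{(i)}$; and when the fibres are the compact cycles, properness of $\mu|_D$ plus a strictly plurisubharmonic exhaustion of the bounded symmetric base gives pseudoconvexity. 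But the substantive content of the proposition is precisely the step you defer: that $D$ (equivalently, each fibre slice) is cycle connected whenever no proper parabolic contains both $\frakk$ and $\frakq_z$. Saying you ``would attack it through the Schubert-slice machinery \dots\ by a dimension/covering estimate'' is a pointer to where the proof lives (the Iwasawa--Borel Schubert slices and supporting-cycle arguments of \cite{Huckleberry10a,FelsEtAl06}), not a proof; without it neither the identification $D/{\sim}\cong D_1$ nor the dichotomy itself is established, since the dichotomy is exactly the assertion that the fibres of $\mu|_D$ are single $\sim$-classes.

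A second, smaller error: the claim that the structure theory ``forces $W_0=K_0$ or $W_0=G_0$'' contradicts your own classification unless $\frakg_0$ is simple. In general $S$ can be a proper nonempty subset of the noncompact ideals, and then $E_z=W_0/V_0$ is a product of a compact cycle with lower-dimensional open orbits attached to the factors in $S$; one must prove cycle connectivity of those factor domains (the same hard case as above, applied factorwise), and the pseudoconvexity argument also needs modification there, since the fibres of $\mu|_D$ are no longer compact and $\rho\circ\mu$ is plurisubharmonic but not an exhaustion. So as written the proposal proves the formal reduction but not the theorem; the missing ingredient is the actual cycle-connectivity argument in the ``no intermediate parabolic'' case, which is the heart of Huckleberry's result.
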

Because a holomorphic function on $D$ is factored as $D\to D/{\sim}\to\CC$, the flag domain $D$ is cycle connected if and only if any global function on $D$ is constant.
Moreover, by Proposition \ref{An}, pseudoconcave flag domains are cycle connected.
Rather, the following theorem holds.
\begin{Theorem}[\cite{HongEtAl18,HayamaEtAl19}]
  A flag domain $D$ is cycle connected if, and only if, $D$ is pseudoconcave.
\end{Theorem}
\subsection{Generic $1$-connectivity}
Let $W=W(G,H)$ be the Weyl group with the Cartan subgroup $H=\exp{(\frakh)}$, and let $W_{\Phi}$ be the subgroup generated by the simple reflections associated with $\Phi$.
By the Bruhat decomposition, $Q_z$-orbits in $Z$ are parameterized by $W_{\Phi}\backslash W/W_{\Phi}$, and there is a unique open $Q_z$-orbit $\calO$.
\begin{Definition}
  A flag domain $D$ is generically $1$-connected if $C_0\cap\calO\neq \emptyset$.
\end{Definition}
The preimage of the base point under $D\to D/{\sim}$ contains an open subset if $D$ is generically $1$-connected.
Then, by Proposition \ref{cycle}, we have the following corollary:
\begin{Corollary}
  Generically $1$-connected flag domains are cycle connected (or equivalently pseudoconcave).
\end{Corollary}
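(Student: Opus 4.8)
The plan is to chase the two implications through the material already established: generic $1$-connectivity $\Rightarrow$ cycle connected, and cycle connected $\Leftrightarrow$ pseudoconcave (the latter being the quoted Theorem of \cite{HongEtAl18,HayamaEtAl19}), so only the first implication needs an argument. First I would make precise the sentence already in the text: the fibration $\pi\colon D\to D/{\sim}$ is a holomorphic map whose fibers are the cycle-connectivity classes, and by Proposition \ref{cycle} the base $D/{\sim}$ is either a point or a Hermitian symmetric domain. If $D/{\sim}$ is a point there is nothing to prove, since then $D$ is by definition cycle connected. So assume for contradiction that $D/{\sim}$ is a positive-dimensional Hermitian symmetric domain, and that $D$ is generically $1$-connected, i.e. $C_0\cap\calO\neq\emptyset$.

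The key step is to exploit the fact that $\calO$ is the \emph{open} $Q_z$-orbit and that $Q_z\subset G$ acts on the whole picture. I would argue that $\pi(\calO)$ is a single point of $D/{\sim}$: indeed $\calO=Q_z(z)$, and for each $g\in Q_z$ the translate $g(C_0)$ is a cycle through $g(z)$, so $g(z)\sim z$; hence all of $\calO$ lies in the fiber $\pi^{-1}(\pi(z))$. Since $\calO$ is open and dense in $Z$ (hence $\calO\cap D$ is open in $D$), the fiber $\pi^{-1}(\pi(z))$ contains a nonempty open subset of $D$ — this is precisely the sentence preceding the Corollary. But a fiber of a holomorphic submersion onto a positive-dimensional complex manifold has empty interior, a contradiction. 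Therefore $D/{\sim}$ must be a point, so $D$ is cycle connected, and by the quoted Theorem it is pseudoconcave.

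The one point requiring care — and the main obstacle — is the claim that $C_0\cap\calO\neq\emptyset$ forces $\calO$ to meet $D$ in an open subset of $D$ that is \emph{contained in one fiber}: one must check that the generic cycle through a point of $\calO$ stays inside $D$ (so that the relation $\sim$ is genuinely witnessed inside $D$), and that $\calO$, being Zariski-open in $Z$, genuinely intersects the open set $D$ in something open and nonempty once it meets $C_0\subset D$. Both follow from openness of $D$ in $Z$ together with $C_0\subset D$, but it is worth spelling out that the translates $g(C_0)$ for $g$ near the identity in $Q_z$ remain inside $D$ by openness, which is what legitimately places a whole neighborhood of a point of $C_0\cap\calO$ into a single $\sim$-class. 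Once that is granted, the contradiction with $\dim_{\CC}(D/{\sim})>0$ is immediate from elementary complex geometry, and the Corollary follows by invoking Proposition \ref{cycle} and the cited Theorem.
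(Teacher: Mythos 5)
Your overall route is the paper's own: show that the fiber of $D\to D/{\sim}$ through the base point contains a nonempty open subset of $D$, then rule out the positive-dimensional case of Proposition \ref{cycle} and quote the equivalence of cycle connectivity with pseudoconcavity. This is exactly the one-sentence justification the paper gives before the Corollary, with the mechanism (a point $z'\in C_0\cap\calO$ and the translates $g(C_0)$, $g\in Q_z$) spelled out in the paragraph right after it. One genuine slip, though: you write $\calO=Q_z(z)$, which is false --- $Q_z$ is by definition the stabilizer of $z$, so $Q_z(z)=\{z\}$ --- and with that identification your step ``$g(C_0)$ is a cycle through $g(z)$, so $g(z)\sim z$'' is vacuous and places none of $\calO$ into the fiber. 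The correct statement, which the paper uses, is $\calO=Q_z(z')$ for a chosen $z'\in C_0\cap\calO$: then $g(C_0)$ contains both $g(z)=z$ and $g(z')$, and for $g$ in a neighborhood of the identity one has $g(C_0)\subset D$ (compactness of $C_0$ plus openness of $D$) while the points $g(z')$ sweep out a neighborhood of $z'$ in the open orbit $\calO$; hence that neighborhood lies in the $\sim$-class of $z$. Your final paragraph in effect says precisely this, so the argument is repaired by substituting $z'$ for $z$ throughout the key step. You should also drop the stronger claim that \emph{all} of $\calO$ lies in one fiber: $\calO$ need not be contained in $D$, and for $g$ far from the identity $g(C_0)$ may leave $D$, so the relation $\sim$ (which requires cycles inside $D$) does not apply there; only the local statement is needed to contradict a positive-dimensional $D/{\sim}$, and the rest of your proof (holomorphy of $D\to D/{\sim}$, empty interior of fibers, and the quoted Theorem) is fine.
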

The above corollary implies generic $1$-connectivity is a kind of cycle connectivity.
While cycle connectivity guarantees that any two points are connected by a chain of cycles of finite length, generic $1$-connectivity ensures that any point in $\calO$  is connected to the base point by a chain of length $1$.
In fact, if $D$ is generically $1$-connected, any point in $\calO$ is written as $g(z')$ with $z'\in C_0\cap \calO$ and $g\in Q_z$.
Then both $z$ and $g(z')$ are contained in $g(C_0)$.

Now $K$ is a reductive subgroup of $G$.
Because $C_0$ is a projective variety, $Q^K_z=K\cap Q_z$ is a parabolic subgroup.
Then $C_0$ can be decomposed into the disjoint union of $Q_z^K$-orbits.
Let $\frakh_0=\frakt_0\oplus \fraka_0$ be the $\theta$-stable decomposition, where $\frakt_0=\frakh_0\cap\frakk_0$ and $\fraka_0=\frakh_0\cap\frakp_0$ with the Cartan decomposition $\frakg_0=\frakk_0+\frakp_0$.
Because $\frakh_0$ is maximally compact, $\frakt_0$ is a maximal abelian subalgebra of $\frakk_0$.
Let $W_K=W(K_0,T_0)$ be the Weyl group with the maximal torus $T_0=\exp(\frakt_0)$.
Then, each $Q_z^K$-orbit in $C_0$ is the orbit at $w(z)$ with some $w\in W_K$.
Let $w_0^K$ be the longest element in $W_K$ with respect to the simple root system corresponding to the Borel subgroup contained in $Q_z^K$.
Then the $Q_z^K$-orbit at $w^K_0(z)$ is open.
\begin{Proposition}\label{W_K}
  The following conditions are equivalent:
  \begin{enumerate}
    \item $D$ is generically $1$-connected;
    \item $w^K_0(z)\in \calO$;
    \item $w^K_0(\Phi^-_{\mr{n}})\cap \Phi^-_{\mr{n}}=\emptyset$.
  \end{enumerate}
\end{Proposition}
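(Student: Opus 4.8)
The plan is to prove the three equivalences by running $(1)\Leftrightarrow(2)$ through a density argument inside the base cycle, and $(2)\Leftrightarrow(3)$ through the standard tangent-space criterion for a $Q_z$-orbit in $Z$ to be open, which I would then translate into the root-subset language of $\Phi^{\pm}_{\mr{n}}$.

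For $(1)\Leftrightarrow(2)$ I would argue as follows. The direction $(2)\Rightarrow(1)$ is immediate since $w_0^K(z)\in C_0$. For $(1)\Rightarrow(2)$, I would use that $\calO\cap C_0$ is open in $C_0$ (because $\calO$ is open in $Z$) and stable under $Q_z^K=K\cap Q_z$ (because $\calO$ and $C_0$ both are). Since $C_0$ is irreducible --- it is a flag manifold of the connected group $K$ --- a nonempty open subset is dense, and a dense $Q_z^K$-stable subset must contain the unique open $Q_z^K$-orbit, which by the setup is $Q_z^K(w_0^K(z))$; hence $w_0^K(z)\in\calO$.

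For $(2)\Leftrightarrow(3)$: since $\calO$ is the unique open $Q_z$-orbit, $w_0^K(z)\in\calO$ iff the $Q_z$-orbit through it has dimension $\dim Z=\dim\frakg-\dim\frakq_z$. Picking a representative $\dot w\in N_G(H)$ of $w_0^K$, the stabilizer of $\dot w\cdot z$ in $Q_z$ has Lie algebra $\frakq_z\cap\Ad(\dot w)\frakq_z$, so a quick dimension count reduces openness of the orbit to the condition $\frakq_z+\Ad(\dot w)\frakq_z=\frakg$. I would then expand both summands in root spaces: $\frakq_z=\frakh\oplus\bigoplus_{\alpha\in\Sigma(\frakq_z)}\frakg_\alpha$ with $\Sigma(\frakq_z)=\Phi_{\mr{r}}\cup\Phi^{+}_{\mr{n}}$, and $\Ad(\dot w)\frakg_\alpha=\frakg_{w_0^K(\alpha)}$, so that the condition becomes $\Sigma(\frakq_z)\cup w_0^K(\Sigma(\frakq_z))=\Sigma$. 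Since $\Sigma=\Phi_{\mr{r}}\sqcup\Phi^{+}_{\mr{n}}\sqcup\Phi^{-}_{\mr{n}}$, we have $\Sigma(\frakq_z)=\Sigma\setminus\Phi^{-}_{\mr{n}}$, so the left side equals $\Sigma\setminus(\Phi^{-}_{\mr{n}}\cap w_0^K(\Phi^{-}_{\mr{n}}))$; hence the condition is exactly $w_0^K(\Phi^{-}_{\mr{n}})\cap\Phi^{-}_{\mr{n}}=\emptyset$, i.e.\ $(3)$.

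The dimension count and the manipulation of root subsets are routine; the part I expect to need the most care is the density step in $(1)\Leftrightarrow(2)$ --- making sure that a nonempty $\calO\cap C_0$ is forced to swallow the big $Q_z^K$-cell, which uses irreducibility of $C_0$ together with $Q_z^K$-invariance of $\calO\cap C_0$ --- together with the minor verification that the representative $\dot w$ of $w_0^K$ may be taken in $N_G(H)$ so that $\Ad(\dot w)\frakg_\alpha=\frakg_{w_0^K(\alpha)}$ holds literally; this is available in the present setting.
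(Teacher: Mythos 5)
Your proposal is correct and follows essentially the same route as the paper: the equivalence $(1)\Leftrightarrow(2)$ via the $Q_z^K$-invariance of $\calO\cap C_0$ forcing it to contain the open $Q_z^K$-orbit through $w_0^K(z)$, and $(2)\Leftrightarrow(3)$ via the standard stabilizer/dimension count expressed in root-space terms. Your reformulation of openness as $\frakq_z+\Ad(\dot w)\frakq_z=\frakg$ is just a repackaging of the paper's computation $\dim Q_z(z')=\dim D-|\Phi^-_{\mr{n}}\cap w_0^K(\Phi^-_{\mr{n}})|$, and the point you flag about choosing $\dot w\in N_G(H)$ is indeed available since representatives in $N_{K_0}(T_0)$ normalize the maximally compact Cartan subalgebra.
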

\begin{proof}
  First, we show the equivalence between (1) and (2).
  If $D$ is not generically $1$-connected, $\calO\cap C_0=\emptyset$, and thus $w(z)\not\in \calO$ for all $w\in W_K$.
  Contrastingly, if $D$ is generically $1$-connected, $\calO\cap C_0$ is a $Q_z^K$-invariant subset that must contain the open $Q_z^K$-orbit in $C_0$.
  Hence $w^K_0(z)\in\calO$.

  Next, we show the equivalence between (2) and (3).
  We write $z'=w^K_0(z)$.
  Since $\Sigma=w^K_0(\Sigma(\frakq_z)\cup \Phi_{\mr{n}}^-)=\Sigma(\frakq_{z'})\cup w^K_0(\Phi_{\mr{n}}^-)$, we have 
  \begin{align*}
    \dim{(Q_z(z'))}=|\Sigma(\frakq_z)\cap w_0^K (\Phi_{\mr{n}}^-)|&=|w_0^K (\Phi_{\mr{n}}^-)|-|\Phi_{\mr{n}}^-\cap w_0^K (\Phi_{\mr{n}}^-)|\\&=\dim{D}-|\Phi_{\mr{n}}^-\cap w_0^K (\Phi_{\mr{n}}^-)|.
  \end{align*}
  Then $Q_z(z')$ is open if, and only if, $w^K_0(\Phi^-_{\mr{n}})\cap \Phi^-_{\mr{n}}=\emptyset$.
\end{proof}
For the longest element $w_0\in W$, the $Q_z$-orbit at $w_0(z)$ is open.
Moreover, any element $w\in W$ such that $w(z)$ contained in $\calO$ is written as $w=w_1 w_0 w_2$ with $w_1,w_2\in W_{\Phi}$.
If $\frakh_0$ is compact, that is, $\frakh_0=\frakt_0$, then $W_K$ is a subset of $W$.
In this case, $w_0^K$ is written as $w_0^K=w_1 w_0 w_2$ if and only if $w_0^K(z)\in \calO$.
By Proposition \ref{W_K}, we have the following corollary:
\begin{Corollary}\label{cor}
  In the case where $\frakh_0$ is compact, $D$ is generically $1$-connected if, and only if, there exists $w_1,w_2 \in W_{\Phi}$ such that $w_1 w_0^K w_2$ is the longest element in $W$.
\end{Corollary}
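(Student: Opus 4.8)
The plan is to read the corollary directly off Proposition~\ref{W_K} together with the description of the open $Q_z$-orbit $\calO$ recalled just above its statement, so the only thing that really needs to be checked is that the hypothesis ``$\frakh_0$ compact'' makes the comparison meaningful. When $\frakh_0$ is compact we have $\fraka_0=0$, hence $\frakt_0=\frakh_0$; therefore $H$ and $T_0$ generate the same Cartan subgroup, and $W_K=W(K_0,T_0)$ is identified with the subgroup of $W=W(G,H)$ generated by the reflections in the roots of $\frakk$. In particular $w_0^K$ is an element of $W$, so the product $w_1 w_0^K w_2$ is a well-defined element of $W$ and the statement makes sense.

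Next I would invoke the Bruhat-theoretic fact recalled before the corollary: the $Q_z$-orbit through $w(z)$ depends only on the double coset $W_\Phi w W_\Phi$, and the open orbit is $\calO=\{\,w(z)\;:\;w\in W_\Phi w_0 W_\Phi\,\}$, where $w_0$ is the longest element of $W$. Consequently $w_0^K(z)\in\calO$ if and only if $w_0^K\in W_\Phi w_0 W_\Phi$, that is, if and only if there exist $w_1,w_2\in W_\Phi$ with $w_0^K=w_1 w_0 w_2$. Since $W_\Phi$ is a group, this last relation is equivalent to $w_0=w_1^{-1}w_0^K w_2^{-1}$ with $w_1^{-1},w_2^{-1}\in W_\Phi$; conversely any relation $w_1 w_0^K w_2=w_0$ with $w_1,w_2\in W_\Phi$ gives $w_0^K=w_1^{-1}w_0 w_2^{-1}\in W_\Phi w_0 W_\Phi$. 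Thus ``$w_0^K(z)\in\calO$'' and ``some element $w_1 w_0^K w_2$ with $w_1,w_2\in W_\Phi$ is the longest element of $W$'' are literally the same condition.

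Finally, the equivalence of (1) and (2) in Proposition~\ref{W_K} states that $D$ is generically $1$-connected precisely when $w_0^K(z)\in\calO$, and combining this with the reformulation of the previous step yields the corollary. I do not expect any serious obstacle: the substance is already contained in Proposition~\ref{W_K}, and the only point requiring a moment's care is that the ``longest double coset'' characterization of $\calO$ converts the one-sided membership $w_0^K\in W_\Phi w_0 W_\Phi$ into the symmetric form $w_1 w_0^K w_2=w_0$, which uses nothing beyond $W_\Phi$ being closed under inverses.
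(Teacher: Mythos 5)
Your argument is correct and follows essentially the same route as the paper: identify $W_K$ with a subgroup of $W$ when $\frakh_0=\frakt_0$, use the Bruhat-theoretic fact that $w(z)\in\calO$ exactly when $w\in W_\Phi w_0 W_\Phi$, and conclude via the equivalence of (1) and (2) in Proposition~\ref{W_K}. The only addition you make explicit — converting $w_0^K\in W_\Phi w_0 W_\Phi$ into $w_1 w_0^K w_2=w_0$ using closure of $W_\Phi$ under inverses — is exactly the step the paper leaves implicit, so there is nothing to correct.
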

\section{Flag Domains in Hermitian Symmetric Spaces}
In this section, we suppose that $Z$ is an irreducible Hermitian symmetric space of compact type.
We then have the dual Hermitian symmetric domain $G_0/K_0$.
To state our result, we review the root structure of $\frakg$.
Let $\frakh_0$ be a maximal abelian subalgebra of $\frakk_0$.
We can choose a simple $\frakh$-root system $\{\psi_1,\ldots , \psi_n\}$ such that only one root is noncompact and compact otherwise. 
We suppose $\psi_m$ is noncompact.
Then, the set $\Sigma$ of roots can be decomposed into $\Sigma=\Sigma_{\mr{c}}\cup \Sigma_{\mr{nc}}^+\cup \Sigma_{\mr{nc}}^-$, where 
$$\Sigma_{\rm{c}}=\{\sum \epsilon_i\psi_i\mid \epsilon_m=0\},\quad \Sigma_{\rm{nc}}^{\pm}=\{\sum \epsilon_i\psi_i\mid \epsilon_m=\pm 1\}. $$
Let $\frakp^{\pm}=\sum_{\alpha\in \Sigma_{\rm{nc}}^{\pm}}\frakg_{\alpha}$ and $P^{\pm}=\exp{(\frakp^{\pm})}$.
Then, we have $Z\cong G/KP^+$, and the Hermitian symmetric domain $G_0/K_0$ is regarded as the $G_0$-orbit at the identity coset $z_0\in Z$.

In the Hermitian symmetric space $Z$, all $G_0$-orbits are related by the Cayley transforms.
Choosing a maximal set $\Xi=\{\xi_1,\ldots ,\xi_r \}\subset \Sigma_{\mr{nc}}^+$ of strongly orthogonal roots with $r=\rank_{\RR}{\frakg_0}$, the partial Cayley transform $c_{\xi}$ and the product $c_{\Gamma}=\prod_{\xi\in\Gamma} c_{\xi}$ is constructed from $\xi\in \Gamma\subset\Xi$ (see \cite{Wolf69,WolfEtAl97} for this construction).
For disjoint subsets $\Gamma, \Delta\subset \Xi$, we define $z_{\Gamma,\Delta}=c_{\Gamma}c_{\Delta}^2(z_0)$.
By \cite{Wolf69,WolfEtAl97}, the following properties hold:
\begin{itemize}
  \item Every $G_0$-orbit on $Z$ is written as $G_0(z_{\Gamma,\Delta})$ with some $\Gamma, \Delta\subset \Xi$;
  \item $G_0(z_{\Gamma,\Delta})=G_0(z_{\Gamma',\Delta'})$ if and only if $|\Gamma|=|\Gamma'|$ and $|\Delta|=|\Delta'|$;
  \item $G_0(z_{\Gamma,\Delta})$ is open if and only if $\Gamma=\emptyset$
\end{itemize}
Then, any flag domain in $Z$ is written as $G_0(z_{\emptyset,\Delta})$, and it depends on the cardinality of $\Delta$.

We choose $\Delta$ as the set $\{\xi_1,\ldots , \xi_a\}$ with $1\leq a\leq r$.
The square $c^2_{\Delta}$ of the partial Cayley transform is $s_{\Delta}=\prod_{1\leq i\leq a}s_{i}$ with the reflection $s_{i}$ with respect to $\xi_i\in \Delta$.
We set $z_a=s_{\Delta}(z_0)$ as a base point.
Then the $G_0$-orbit $D_a=G_0(z_a)$ is a flag domain in $Z$.
We denote by $\frakq_a$ the parabolic subalgebra at $z_a$.
Here, $\frakq_a=s_{\Delta}(\frakk +\frakp_+)$, which contains the Borel subalgebra corresponding to the simple root system $\Psi=\{s_{\Delta}(\psi_1),\ldots ,s_{\Delta}(\psi_n)\}$.
The set $\Sigma(\frakq_a)$ of roots is decomposed as 
$$\Sigma(\frakq_a)=\Phi_{\rm{r}}\cup\Phi_{\rm{n}}^+\quad \text{with }\Phi=\Psi\setminus \{s_{\Delta}(\psi_m)\}.$$
Now $\Phi_{\rm{n}}^-$ is decomposed as $\Phi_{\rm{n}}^-=(\Phi_{\rm{n}}^-\cap\Sigma_{\rm{c}})\cup (\Phi_{\rm{n}}^-\cap\Sigma_{\rm{nc}})$.
Using the longest element $w_K^0$, we have $w_K^0(\Phi_{\rm{n}}^-\cap\Sigma_{\rm{c}})\subset \Sigma_{\rm{c}}$ and $w_K^0(\Phi_{\rm{n}}^-\cap\Sigma_{\rm{c}})\cap (\Phi_{\rm{n}}^-\cap\Sigma_{\rm{c}})=\emptyset$.
Then, to show generic $1$-connectivity, Proposition \ref{W_K} is simplified as follows:
\begin{Corollary}\label{herm}
  $D_a$ is generically $1$-connected if and only if $w^K_0(\Phi_{\rm{n}}^-\cap\Sigma_{\rm{nc}})\cap (\Phi^-_{\rm{n}}\cap\Sigma_{\rm{nc}}) =\emptyset$.
\end{Corollary}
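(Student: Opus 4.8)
The plan is to reduce Proposition~\ref{W_K}(3) to the asserted criterion by splitting $\Phi^-_{\mr{n}}$ into its compact and noncompact parts and showing that the compact part never causes an obstruction. By Proposition~\ref{W_K}, $D_a$ is generically $1$-connected if and only if $w^K_0(\Phi^-_{\mr{n}})\cap\Phi^-_{\mr{n}}=\emptyset$. Using the decomposition
\[
  \Phi^-_{\mr{n}}=(\Phi^-_{\mr{n}}\cap\Sigma_{\mr{c}})\cup(\Phi^-_{\mr{n}}\cap\Sigma_{\mr{nc}})
\]
and the linearity of $w^K_0$, the intersection $w^K_0(\Phi^-_{\mr{n}})\cap\Phi^-_{\mr{n}}$ breaks into four pieces according to whether each factor is compact or noncompact. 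The key structural input is that $w^K_0\in W_K$, hence $w^K_0$ preserves $\Sigma_{\mr{c}}$ and maps $\Sigma_{\mr{nc}}$ to $\Sigma_{\mr{nc}}$ (indeed $\Sigma_{\mr{nc}}^+$ to $\Sigma_{\mr{nc}}^+$, since $W_K$ acts on the weights of $\frakp^+$). Consequently the two ``mixed'' pieces — $w^K_0(\Phi^-_{\mr{n}}\cap\Sigma_{\mr{c}})\cap(\Phi^-_{\mr{n}}\cap\Sigma_{\mr{nc}})$ and $w^K_0(\Phi^-_{\mr{n}}\cap\Sigma_{\mr{nc}})\cap(\Phi^-_{\mr{n}}\cap\Sigma_{\mr{c}})$ — are automatically empty, because a compact root cannot equal a noncompact one.

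It remains to handle the purely compact piece $w^K_0(\Phi^-_{\mr{n}}\cap\Sigma_{\mr{c}})\cap(\Phi^-_{\mr{n}}\cap\Sigma_{\mr{c}})$, and here I would invoke exactly the two facts stated in the excerpt just before the corollary: $w^K_0(\Phi^-_{\mr{n}}\cap\Sigma_{\mr{c}})\subset\Sigma_{\mr{c}}$ and $w^K_0(\Phi^-_{\mr{n}}\cap\Sigma_{\mr{c}})\cap(\Phi^-_{\mr{n}}\cap\Sigma_{\mr{c}})=\emptyset$. The second of these is precisely the vanishing of the compact piece. (The conceptual reason is that $\Phi^-_{\mr{n}}\cap\Sigma_{\mr{c}}$ consists of the negative roots of $\frakk$ that are ``moved'' by the parabolic $\frakq^K_a=\frakk\cap\frakq_a$ inside $K$, i.e.\ the roots in $(\Phi_{\mr{n}}^-)_{\mr{r}}$-complement within $\frakk$; applying the longest element $w^K_0$ of the Levi Weyl group $W_K$ sends these negative ``nilradical'' roots of $\frakk$ to positive ones, so the image meets $\Phi^-_{\mr{n}}\cap\Sigma_{\mr{c}}$ in the empty set — this is the same mechanism by which $w_0$ of a parabolic's Levi interchanges the two halves of the nilradical.)

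Assembling the four pieces, $w^K_0(\Phi^-_{\mr{n}})\cap\Phi^-_{\mr{n}}$ equals the single remaining piece $w^K_0(\Phi^-_{\mr{n}}\cap\Sigma_{\mr{nc}})\cap(\Phi^-_{\mr{n}}\cap\Sigma_{\mr{nc}})$, which gives the equivalence claimed in Corollary~\ref{herm}. I do not expect a genuine obstacle here, since all the nontrivial input — the behaviour of $w^K_0$ on the compact roots in $\Phi^-_{\mr{n}}$ — has already been recorded in the text preceding the statement; the only thing to verify carefully is that $w^K_0$, being in $W_K$, cannot carry a compact root to a noncompact one, which follows because $W_K$ is generated by reflections in compact roots and every compact reflection preserves the partition $\Sigma=\Sigma_{\mr{c}}\sqcup\Sigma_{\mr{nc}}^+\sqcup\Sigma_{\mr{nc}}^-$ (the coefficient $\epsilon_m$ of $\psi_m$ is unchanged by a reflection in a root with $\epsilon_m=0$).
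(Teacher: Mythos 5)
Your proposal is correct and follows essentially the same route as the paper, which proves the corollary implicitly by the decomposition $\Phi^-_{\mr{n}}=(\Phi^-_{\mr{n}}\cap\Sigma_{\mr{c}})\cup(\Phi^-_{\mr{n}}\cap\Sigma_{\mr{nc}})$ together with the two facts stated just before it ($w^K_0(\Phi^-_{\mr{n}}\cap\Sigma_{\mr{c}})\subset\Sigma_{\mr{c}}$ and $w^K_0(\Phi^-_{\mr{n}}\cap\Sigma_{\mr{c}})\cap(\Phi^-_{\mr{n}}\cap\Sigma_{\mr{c}})=\emptyset$), so that Proposition~\ref{W_K}(3) reduces to the noncompact intersection. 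Your explicit check that $w^K_0\in W_K$ preserves the compact/noncompact partition (since compact reflections leave the $\psi_m$-coefficient unchanged) is exactly the detail the paper leaves tacit.
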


The Hermitian symmetric space with a classical group $G$ can be classified into four types (see \cite[Chapter VII]{Helgason01a} for details): \textit{AIII}, \textit{DIII}, \textit{BDI}, and \textit{CI}.
We consider the $1$-connectivity of $D_a$ in the cases of type \textit{AIII} and \textit{CI}.

\subsection{Case for type \textit{CI}}
We fix a symplectic form $\omega$ on $\RR^{2n}$, and let $G_0=Sp(2n,\RR)$ be the subgroup of $SL(2n,\RR)$, leaving invariant this form.
We have a basis $\{f_i\}_i$ that satisfies $\sqrt{-1}\omega(v ,\bar{w})=\sum_{i\leq n}(v_iw_{i}-v_{n+i}w_{i+i})$ for $v=\sum v_if_i$ and $ w=\sum w_if_i$.
Using this basis, we may regard $G_0$ as $U(n,n)\cap G$.

The Grassmannian $Z$ of $\omega$-isotropic $n$-planes in $\CC^{2n}$ is a $G$-flag manifold, and all open $G_0$-orbits correspond to pairs of numbers of positive and negative signatures of the associated Hermitian form.
Let 
$$z_a=\Span{\{f_i\mid a<i\leq n+a\}}\in Z$$
for $0\leq a\leq n$, where $\sgn{(z_a)}=(n-a,a)$.
Then, any flag domain is written as $D_a=G_0(z_a)$, and both $D_0$ and $D_n$ are Hermitian symmetric domains, that is, each one is the Siegel upper (or lower) half space.
\begin{Theorem}\label{CI}
  An $Sp(2n,\RR)$-flag domain $D_{a}$ in the Hermitian symmetric space is generically $1$-connected if and only if $2a=n$.
\end{Theorem}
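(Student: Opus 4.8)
The plan is to make both sides of the equivalence in Corollary~\ref{herm} completely explicit. I use the standard realization of the root system of $\frakg=\mathfrak{sp}(2n,\CC)$ in coordinates $e_1,\dots,e_n$ on the compact Cartan $\frakh_0\subset\frakk_0$: the roots are the $\pm e_i\pm e_j$ ($i<j$) and the $\pm 2e_i$, the compact roots being the $\pm(e_i-e_j)$ and the noncompact ones the $\pm(e_i+e_j)$ with $i\le j$. With simple roots $\psi_i=e_i-e_{i+1}$ and $\psi_n=2e_n$, the noncompact simple root is $\psi_m=\psi_n$, the strongly orthogonal family is $\Xi=\{2e_1,\dots,2e_n\}$, and for $\Delta=\{2e_1,\dots,2e_a\}$ the Cayley square $s_\Delta$ acts on the $e_i$ by the sign change $e_i\mapsto-e_i$ ($i\le a$), $e_i\mapsto e_i$ ($i>a$). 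I work with $1\le a\le n-1$; the two endpoints $a=0,n$ give the Siegel half-spaces, which are Hermitian symmetric domains, hence not cycle connected and so not generically $1$-connected, while there $2a=n$ forces $n=0$.

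The first step is to identify $\Phi^-_{\mr{n}}\cap\Sigma_{\mr{nc}}$. Pushing the original positive system forward by $s_\Delta$ gives the positive system $\Sigma^+$ with simple system $\Psi$; the point requiring care is that $s_\Delta$ does \emph{not} preserve $\Sigma_{\mr{c}}$ or $\Sigma_{\mr{nc}}$, since it sends $e_j-e_i$ to $e_i+e_j$ whenever $i\le a<j$. One then checks that $\Phi=\Psi\setminus\{s_\Delta(\psi_n)\}$ generates the subsystem $\Phi_{\mr{r}}=\{e_i-e_j:i\ne j\le a\}\cup\{e_i-e_j:a<i\ne j\le n\}\cup\{\pm(e_i+e_j):i\le a<j\}$ of type $A_{n-1}$, and subtracting $\Phi_{\mr{r}}$ from $-\Sigma^+$ and intersecting with $\Sigma_{\mr{nc}}$ leaves
\[
  \Phi^-_{\mr{n}}\cap\Sigma_{\mr{nc}}=\{-(e_i+e_j):a<i\le j\le n\}\cup\{e_i+e_j:1\le i\le j\le a\},
\]
the negative sum-roots supported in the last $n-a$ coordinates together with the positive sum-roots supported in the first $a$.

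The second step is to pin down $w^K_0$. Here $K\cong GL(n,\CC)$, so $W_K=S_n$ acting by permutations of the $e_i$, and the roots of $Q^K_{z_a}=K\cap Q_{z_a}$ are $\{e_i-e_j:i\ne j\le a\}\cup\{e_i-e_j:a<i\ne j\le n\}\cup\{e_j-e_i:i\le a<j\}$; choosing the standard Borel of its Levi $\mathfrak{gl}(a)\times\mathfrak{gl}(n-a)$, the simple system of $W_K$ contained in $Q^K_{z_a}$ corresponds to the total order $a+1\prec\cdots\prec n\prec1\prec\cdots\prec a$, and $w^K_0$ is the associated order reversal (the outcome does not depend on this choice, as $\calO\cap C_0$ is $Q^K_{z_a}$-invariant). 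When $2a=n$ this $w^K_0$ is the full reversal $e_i\mapsto e_{n+1-i}$, which interchanges the coordinate blocks $\{1,\dots,a\}$ and $\{a+1,\dots,n\}$; when $2a\ne n$ the blocks have different sizes and $w^K_0$ must reverse a nonempty sub-block of one of them inside itself. The disjointness test of Corollary~\ref{herm} then runs as follows. If $2a=n$, $w^K_0$ carries the positive sum-roots on $\{1,\dots,a\}$ to positive sum-roots on $\{a+1,\dots,n\}$ and the negative sum-roots on $\{a+1,\dots,n\}$ to negative sum-roots on $\{1,\dots,a\}$, none of which appears in the displayed set, so $\Phi^-_{\mr{n}}\cap\Sigma_{\mr{nc}}$ and its $w^K_0$-image are disjoint and $D_a$ is generically $1$-connected. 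If $2a<n$, then $-2e_n\in\Phi^-_{\mr{n}}\cap\Sigma_{\mr{nc}}$ and $w^K_0(-2e_n)=-2e_{2a+1}$, which again lies in $\Phi^-_{\mr{n}}\cap\Sigma_{\mr{nc}}$ since $a<2a+1\le n$; if $2a>n$, then $2e_1\in\Phi^-_{\mr{n}}\cap\Sigma_{\mr{nc}}$ and $w^K_0(2e_1)=2e_{2a-n}\in\Phi^-_{\mr{n}}\cap\Sigma_{\mr{nc}}$ since $1\le2a-n\le a$. In both cases the intersection is nonempty, so $D_a$ is not generically $1$-connected, and the theorem follows.

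The step I expect to be the real work is the explicit identification of $\Phi_{\mr{r}}$, hence of $\Phi^-_{\mr{n}}\cap\Sigma_{\mr{nc}}$: since $s_\Delta$ turns one simple root into a sum-root and permutes-with-signs the others, the bookkeeping of which roots survive in $-\Sigma^+\setminus\Phi_{\mr{r}}$ must be done carefully. The companion step, reading off the Borel of $K$ sitting inside $Q^K_{z_a}$ so that $w^K_0$ is the correct permutation of the $e_i$, is the other place where a sign or ordering slip would propagate. Once these two descriptions are secured, the three-case disjointness check above is short and uniform.
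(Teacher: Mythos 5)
Your proposal is correct and follows essentially the same route as the paper: apply Corollary~\ref{herm}, compute $\Phi^-_{\mr{n}}\cap\Sigma_{\mr{nc}}=\{e_i+e_j\mid i\le j\le a\}\cup\{-(e_i+e_j)\mid a<i\le j\le n\}$, and test disjointness against the image under a longest element of $W_K$. The only deviation is your choice of Borel of $K$ inside $Q^K_{z_a}$ (the paper uses the simple system $\{-\psi_i\}_{i\neq n}$, so its $w^K_0$ is the full reversal $e_i\mapsto e_{n+1-i}$ for every $a$), which is harmless since, as you note, the criterion is independent of that choice, and your explicit witnesses $-2e_n$ and $2e_1$ in the cases $2a<n$ and $2a>n$ check out.
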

For this proof, we consider the root structure and the Weyl group action.
We choose $\frakk_0=\fraku(2n)\cap \frakg_0\cong \fraku(n)$, and let $\frakh_0\subset \fraku(n)$ be the maximal torus consisting of diagonal matrices.
We define $e_i\in\frakh^*$ by $e_i(X)=a_i$ for $X=\diag{(a_1,\ldots ,a_{n})}\in \frakh$.
Then, we may choose the simple root system $\{\psi_1,\ldots ,\psi_{n}\}$, where $\psi_i=e_i-e_{i+1}$ for $i<n$ and $\psi_n=2e_n$, and we can write
\begin{align*}
  &\Sigma_{\rm{c}}=\{\sum \epsilon_i\psi_i\mid \epsilon_n=0\}=\{e_i-e_j\mid i\neq j\},\\
  &\Sigma_{\rm{nc}}^{\pm}=\{\sum \epsilon_i\psi_i\mid \epsilon_n=\pm 1\}=\{\pm(e_i+e_j)\mid 1\leq i\leq j\leq n\}
\end{align*}

Now $G_0$ is the split real form, i.e. $\rank_{\RR}{\frakg_0}=\dim{\frakh_0}=n$.
The maximal set of noncompact orthogonal roots is $\{\xi_1,\ldots , \xi_n\}$ with $\xi_i=2e_i$.
Then $s_{\Delta}(e_i)=-e_i$ if $i<a$, and $s_{\Delta}(e_i)=e_i$ otherwise.
Since $-s_{\Delta}(\psi_i)\in \Sigma_{\rm{c}}\cup\Sigma_{\rm{nc}}^+$, i.e. $-\psi_i\in s_{\Delta}(\Sigma_{\rm{c}}\cup\Sigma_{\rm{nc}}^+)$, for $1\leq i\leq n-1$, 
$\Sigma(\frakq_a)=s_{\Delta}(\Sigma_{\rm{c}}\cup\Sigma_{\rm{nc}}^+)$ contains the set $\{-\psi_i\}_{i\neq n}$ of simple roots of $\Sigma_{\rm{c}}$.
By using these simple roots, $W_K$ is the symmetry group $S_n$ in terms of the permutations of the indices of $e_1,\ldots ,e_n$, and the longest element in $W_K$ is
\begin{align}\label{wK0}
  w^K_0=\bigl(
  \begin{smallmatrix}
    1 & 2 & \cdots & n-1 & n \\
    n & n-1 & \cdots &  2  & 1
  \end{smallmatrix}
  \bigr).
\end{align}

\begin{proof}[Proof of Theorem \ref{CI}]
We apply Corollary \ref{herm}.  
Since $\Phi_{\rm{n}}^-=s_{\Delta}(\Sigma_{\rm{nc}}^-)$, we have
$$\Phi_{\rm{n}}^-\cap\Sigma_{\rm{nc}}=\{e_i+e_j\mid 1\leq i\leq j\leq a\}\cup\{-e_i-e_j\mid a+1\leq i\leq j\leq n\}.$$
Then, $w^K_0(\Phi_{\rm{n}}^-\cap\Sigma_{\rm{nc}})\cap (\Phi^-_{\rm{n}}\cap\Sigma_{\rm{nc}}) =\emptyset$ if and only if $2a=n$.
\end{proof}

Next, we consider the generic $1$-connectivity of a certain type of flag domain fibered over $D_a$ with $0< a<n$.
Let $Z_{m}$ be the flag manifold consisting of sequences $V_1\subset V_2$ of $\omega$-isotropic subspaces with $0<\dim{V_1}=m\leq a< \dim{V_2}=n$.
We set
\begin{align}\label{F}
  z_{m,0}=(F_m\subset F_n)\in Z_m \quad \text{where }F_r=\Span{\{f_i\}_{i\leq r}}.
\end{align}
We continue to assume $\Delta=\{\xi_1,\ldots ,\xi_a\}$ and set $z_{m,a}=s_{\Delta}(z_{m,0})$ as the base point.
Then, $D_{m,a}=G_0(z_{m,a})$ is the flag domain in $Z_m$, determined by 
$$\sgn{(V_1)}=(0,m),\quad \sgn{(V_2)}=(n-a,a).$$

\begin{Proposition}\label{CI2}
  The flag domain $D_{m,a}$ is not generically $1$-connected.
\end{Proposition}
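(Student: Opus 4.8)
The plan is to apply Proposition~\ref{W_K} via its condition~(3). The maximal compact subgroup $K_0\cong U(n)$, the compact Cartan $\frakh_0$, and hence the Weyl group $W_K=S_n$ together with its longest element $w^K_0$ given by \eqref{wK0}, are exactly as in the setup preceding Theorem~\ref{CI}. So it suffices to produce a single root $\alpha$ lying in $w^K_0(\Phi^-_{\mr{n}})\cap\Phi^-_{\mr{n}}$, where $\Phi^-_{\mr{n}}$ is the set attached to the parabolic $\frakq_{m,a}$ at $z_{m,a}$ as in Section~2; once such an $\alpha$ is found, this intersection is nonempty and $D_{m,a}$ is not generically $1$-connected.

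The first step is to pin down $\Phi^-_{\mr{n}}$. The stabilizer of $z_{m,0}=(F_m\subset F_n)$ is the standard parabolic obtained by deleting the two simple roots $\psi_m$ and $\psi_n$, so $\frakq_{m,a}=s_\Delta(\frakq_{m,0})$ corresponds to $\Phi=\Psi\setminus\{s_\Delta(\psi_m),s_\Delta(\psi_n)\}$ with $\Psi=\{s_\Delta(\psi_1),\dots,s_\Delta(\psi_n)\}$, and
\[
  \Phi^-_{\mr{n}}=s_\Delta\bigl(\{\,\beta\in\Sigma^-\mid \text{the coefficient of }\psi_m\text{ or of }\psi_n\text{ in }\beta\text{ is nonzero}\,\}\bigr).
\]
In contrast to the situation of Theorem~\ref{CI}, where only $\psi_n$ is deleted and $\Phi^-_{\mr{n}}=s_\Delta(\Sigma^-_{\mr{nc}})$, the extra node $\psi_m$ enlarges $\Phi^-_{\mr{n}}$, and it is this enlargement that kills generic $1$-connectivity even when $2a=n$, the one value for which $D_a$ itself is generically $1$-connected. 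For the witness root I would take $\alpha=e_1+e_n$: since $\xi_1=2e_1\in\Delta$ while $\xi_n=2e_n\notin\Delta$ (recall $1\le a<n$), the involution $s_\Delta$ sends $e_1\mapsto-e_1$ and fixes $e_n$, so $s_\Delta(\alpha)=e_n-e_1=-(\psi_1+\cdots+\psi_{n-1})$ is a negative root in which $\psi_m$ occurs with coefficient $-1$ (here $1\le m\le n-1$); hence $\alpha\in\Phi^-_{\mr{n}}$. On the other hand $w^K_0$ interchanges $e_1$ and $e_n$, so $w^K_0(\alpha)=\alpha\in\Phi^-_{\mr{n}}$ as well, which completes the argument.

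The only substantive point is the bookkeeping behind the displayed formula — describing $\frakq_{m,a}$ at the Cayley-transformed flag, confirming which simple roots are deleted, and tracking them through $s_\Delta$ — together with the small observation that one should look for a root fixed by the reversal $w^K_0$, not merely one whose $w^K_0$-image happens to remain in $\Phi^-_{\mr{n}}$; the root $e_1+e_n$ is the natural candidate precisely because $e_1$ is moved by $s_\Delta$ into the noncompact part while $e_n$ is untouched and the index pair $\{1,n\}$ is $w^K_0$-stable. As a consistency check, note that $e_1+e_n\notin s_\Delta(\Sigma^-_{\mr{nc}})$, so the argument genuinely uses the refinement $V_1\subset V_2$ and does not conflict with $D_{n/2}$ being generically $1$-connected.
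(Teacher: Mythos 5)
Your argument is correct and is essentially the paper's own proof: you identify $\frakq_{m,a}$ with the parabolic obtained by deleting $s_\Delta(\psi_m)$ and $s_\Delta(\psi_n)$, exhibit the same witness root $e_1+e_n=s_\Delta(-e_1+e_n)\in\Phi^-_{\mr{n}}$, observe it is fixed by the reversal $w^K_0$ of (\ref{wK0}), and conclude via condition (3) of Proposition \ref{W_K}. The only cosmetic difference is that the paper explicitly records that $\Sigma(\frakq_{m,a})$ contains $\{-\psi_i\}_{i\neq n}$ to justify that $w^K_0$ is the full reversal, a point you cite as ``the same setup as before''; otherwise the proofs coincide.
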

\begin{proof}
  We denote by $\frakq_{m,a}$ the parabolic subalgebra at $z_{m,a}$.
  Then, $\frakq_{m,a}$ contains the Borel subalgebra corresponding to the simple root system $\Psi=\{s_{\Delta}(\psi_1),\ldots ,s_{\Delta}(\psi_n)\}$, where  $\Sigma(\frakq_{m,a})=\Phi_{\rm{r}}\cup\Phi_{\rm{n}}^+$ with $\Phi=\Psi\setminus \{s_{\Delta}(\psi_m),s_{\Delta}(\psi_n)\}$.
  We have 
  $$e_1+e_n=s_{\Delta}(-e_1+e_n)=-\sum_{i=1}^{n-1}s_{\Delta}(\psi_i)\in \Phi_{\rm{n}}^-.$$
  Moreover, $\Sigma(\frakq_{m,a})$ contains the set $\{-\psi_i\}_{i\neq n}$ of simple roots of $\Sigma_{\rm{c}}$, and the longest element $w_0^K$ is defined in (\ref{wK0}).
  Then, $ w_0^K(e_1+e_n)=e_1+e_n$; hence, $D_{m,a}$ is not generically $1$-connected by Proposition \ref{W_K}.
\end{proof}

\subsection{Case for type \textit{AIII}}
We fix a Hermitian form $\langle \bullet,\bullet\rangle$ on $\CC^{p+q}$, and let $G_0=SU(p,q)$ be the subgroup of $SL(p+q,\CC)$, leaving invariant this form.
We may assume $p\leq q$.
We have a basis $\{f_i\}_i$ that satisfies $\langle v,w\rangle=\sum_{i\leq p}v_iw_i-\sum_{j> p}v_jw_j$ for $v=\sum v_if_i$ and $w=\sum w_if_i$.

The Grassmannian $Z$ of the $p$-planes in $\CC^{p+q}$ is a $G$-flag manifold, and all open $G_0$-orbits correspond to pairs of numbers of positive and negative signatures.
Let 
$$z_a=\Span{\{f_i\mid a<i\leq p, \; p+q-a<i\leq p+q\}}$$
for $0\leq a\leq p$, where $\sgn{(z_a)}=(p-a,a)$.
Then, any flag domain is written as $D_a=G_0(z_a)$, and $D_0$ is the Hermitian symmetric domain $\{X\in M_{p,q}(\CC)|\; I-{}^t\bar{X}X> 0\}$.
\begin{Theorem}\label{AIII}
  An $SU(p,q)$-flag domain $D_a$ in the Hermitian symmetric space is generically $1$-connected if and only if $p\leq 2a\leq q$.
\end{Theorem}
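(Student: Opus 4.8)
\medskip

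\noindent\textbf{Proof strategy for Theorem~\ref{AIII}.}
The plan is to run the same machine as in type \textit{CI} and apply Corollary~\ref{herm}, but the combinatorics here is genuinely two--sided because $K_0\cong S(U(p)\times U(q))$ has two simple factors.

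First I would fix the root data. Take $\frakk_0=(\fraku(p)\oplus\fraku(q))\cap\frakg_0$ with $\frakh_0$ the diagonal maximal torus, and define $e_i\in\frakh^*$ as usual, so that with $N=p+q$ the simple roots are $\psi_i=e_i-e_{i+1}$ $(1\le i\le N-1)$ and $\psi_p$ is the noncompact one; then $\Sigma_{\mr c}=\{e_i-e_j:i\ne j,\ i,j\le p\text{ or }i,j>p\}$ and $\Sigma_{\mr{nc}}^{\pm}=\{\pm(e_i-e_j):1\le i\le p<j\le N\}$. Since $\rank_{\RR}\frakg_0=p$, the strongly orthogonal set is $\Xi=\{\xi_i=e_i-e_{N+1-i}:1\le i\le p\}$, so for $\Delta=\{\xi_1,\dots,\xi_a\}$ the square $s_\Delta$ of the partial Cayley transform acts on $\frakh^*$ by the simultaneous transpositions $e_i\leftrightarrow e_{N+1-i}$ $(1\le i\le a)$ and fixes the remaining $e_j$; one checks that $s_\Delta(z_0)$ is the $z_a$ given in the statement. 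Next I would identify $w^K_0$: a short case analysis on $i$ shows $s_\Delta(-\psi_i)\in\Sigma_{\mr c}\cup\Sigma_{\mr{nc}}^+$ for every compact simple root $\psi_i$ $(i\ne p)$, so $\Sigma(\frakq_a)$ contains all negatives of the compact simple roots; hence the Borel of $K$ in $Q^K_z$ has positive system $-\Sigma_{\mr c}^+$, and $w^K_0$ is the longest element of $W_K=S_p\times S_q$, i.e.\ $e_i\mapsto e_{p+1-i}$ for $i\le p$ and $e_j\mapsto e_{N+p+1-j}$ for $j>p$.

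The heart of the proof is computing $\Phi^-_{\mr n}\cap\Sigma_{\mr{nc}}$. Using $\Phi^-_{\mr n}=s_\Delta(\Sigma_{\mr{nc}}^-)$, I would sort the roots $e_j-e_i\in\Sigma^-_{\mr{nc}}$ with $i\le p<j$ according to whether each index is moved by $s_\Delta$. In two of the four cases the image lands in $\Sigma_{\mr c}$ and drops out; the surviving two give
\[
  \Phi^-_{\mr n}\cap\Sigma_{\mr{nc}}=A\cup B,
\]
where $A=\{e_s-e_u:1\le s\le a,\ N+1-a\le u\le N\}\subset\Sigma^+_{\mr{nc}}$ comes from the case in which both indices move, and $B=\{e_j-e_i:a+1\le i\le p,\ p+1\le j\le N-a\}\subset\Sigma^-_{\mr{nc}}$ comes from the case in which neither moves. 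Since $A,w^K_0(A)\subset\Sigma^+_{\mr{nc}}$ while $B,w^K_0(B)\subset\Sigma^-_{\mr{nc}}$, the cross terms vanish automatically, so by Corollary~\ref{herm} the flag domain $D_a$ is generically $1$-connected if and only if $w^K_0(A)\cap A=\emptyset$ and $w^K_0(B)\cap B=\emptyset$.

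Finally I would carry out those two comparisons. One computes $w^K_0(A)=\{e_{s'}-e_v:p+1-a\le s'\le p,\ p+1\le v\le p+a\}$, and matching index intervals (using $p\le q$) gives $w^K_0(A)\cap A=\emptyset$ iff $2a\le q$; similarly $w^K_0(B)=\{e_v-e_{i'}:p+a+1\le v\le N,\ 1\le i'\le p-a\}$ and $w^K_0(B)\cap B=\emptyset$ iff $p\le 2a$. Conjoining the two conditions gives exactly $p\le 2a\le q$. The step I expect to cost the most care is the index bookkeeping in the two case analyses involving $s_\Delta$ (pinning down $w^K_0$, and extracting $A$ and $B$); I would also check the boundary cases $a=0$ and $a=p$ directly, where one of $A$, $B$ becomes empty, and they are consistent with the stated criterion.

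\medskip
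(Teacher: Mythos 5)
Your proposal is correct and follows essentially the same route as the paper: apply Corollary~\ref{herm}, compute $\Phi^-_{\mr n}\cap\Sigma_{\mr{nc}}=s_\Delta(\Sigma^-_{\mr{nc}})\cap\Sigma_{\mr{nc}}$ (your sets $A\cup B$ agree with the paper's formula (\ref{su_root})), and intersect with its image under the longest element (\ref{wK0_}) of $W_K\cong S_p\times S_q$. The only difference is that you spell out the interval-overlap bookkeeping that the paper dismisses as ``immediate to verify,'' and that bookkeeping (including the use of $p\le q$ to reduce the two conditions to $2a\le q$ and $p\le 2a$) is accurate.
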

As in type \textit{CI}, we consider the root structure and Weyl group action.
We choose
$\frakk_0=\mathfrak{su}(p,q)\cap\fraku(p+q)
$,
and let $\frakh_0\subset\frakk_0$ be the maximal torus consisting of diagonal matrices.
Then, we may choose the simple root system $\{\psi_1,\ldots ,\psi_{p+q-1}\}$, where $\psi_i=e_i-e_{i+1}$.
The simple root $\psi_i$ is compact if $i=p$ and is  noncompact otherwise.
Then we can write
\begin{align*}
  &\Sigma_{\rm{c}}=\{\sum \epsilon_i\psi_i\mid\epsilon_p=0\}=\{e_i-e_j\mid  i,j\leq p\text{ or }p< i,j\},\\
  &\Sigma^\pm_{\rm{nc}}=\{\sum \epsilon_i\psi_i\mid\epsilon_p=\pm 1\}=\{\pm(e_i-e_j)\mid   i\leq p<j\}.
\end{align*}

We set the maximal set 
$\{\xi_1,\ldots ,\xi_p\}$
of strongly orthogonal noncompact roots with
$\xi_i=e_i-e_{p+q+1-i}$.
The reflection $s_i$ with respect to $\xi_i$ is the permutation of the indices of $e_1,\ldots ,e_{p+q}$, which exchanges $i$ and $p+q+1-i$.
Then $s_{\Delta}=\prod_{i=1}^a s_i$ is 
the permutation 
\begin{equation}
s_{\Delta}=
\left(
\begin{smallmatrix}
  1&\cdots &a&a+1&\cdots &p+q-a&p+q-a+1&\cdots &p+q\\
  p+q&\cdots &p+q-a+1&a+1&\cdots &p+q-a&a&\cdots &1
\end{smallmatrix}
\right).\label{s_D}
\end{equation}
Therefore $\Sigma(\frakq_a)=s_{\Delta}(\Sigma_{\rm{c}}\cup\Sigma_{\rm{nc}}^+)$ contains the set $\{-\psi_i\}_{i\neq p}$ of simple roots of $\Sigma_{\rm{c}}$.
By using these simple roots, $W\cong S_{p+q}$ and $W_K\cong S_p\times S_q$.
The longest element in $W_K$ is the permutation
\begin{align}\label{wK0_}
  w_0^K=\left(
\begin{smallmatrix}
  1&\cdots&p&p+1&\cdots &p+q\\
  p&\cdots&1&p+q&\cdots &p+1
\end{smallmatrix}\right).
\end{align}
\begin{proof}[Proof of Theorem \ref{AIII}]
Similar to the proof of Theorem \ref{CI}, we have
\begin{align}\label{su_root}
  \Phi^-_{\rm{n}}\cap\Sigma_{\rm{nc}}=&
 \{e_i-e_j\mid 1\leq i\leq a,\; p+q-a+1 \leq j\leq p+q \}\\ \nonumber
 &\cup \{-e_i+e_j\mid a+1\leq i\leq p,\; p+1\leq j\leq p+q-a\}.
\end{align}
Then, it is immediate to verify that $w^K_0(\Phi^-_{\rm{n}}\cap\Sigma_{\rm{nc}})\cap (\Phi^-_{\rm{n}}\cap\Sigma_{\rm{nc}})=\emptyset $ if and only if $p\leq 2a\leq q$.
Hence Theorem \ref{AIII} follows from Corollary \ref{herm}.
\end{proof}

Next, we consider the generic $1$-connectivity of a certain type of flag domain fibered over $D_a$ with $p\leq 2a\leq q$.
We define sequences $\mathbf{s}:s_1\leq\cdots\leq s_{a+1}$ and $\mathbf{t}:t_1\geq\cdots\geq t_{a+1}$ with
$$
s_i=\begin{cases}
  i &\text{if }i\leq a\\
  p &\text{if }i=a+1,
\end{cases}
\quad 
t_i=\begin{cases}
  p+q-i &\text{if }i\leq a\\
  p &\text{if }i=a+1.
\end{cases}
$$
Let $Z_\mathbf{s}$ (resp. $Z_\mathbf{t}$) be a flag manifold consisting of sequences $V_1\subset \cdots \subset V_{a+1}$ with $\dim{V_i}=s_i$ (resp. $V_1\supset \cdots \supset V_{a+1}$ with $\dim{V_i}=t_i$).
We let
\begin{align*}
  z_{\mathbf{s},0}=(F_1\subset \cdots \subset F_a\subset F_p),\quad 
  z_{\mathbf{t},0}=(F_{p+q-1}\supset \cdots F_{p+q-a}\supset F_p)
\end{align*}
where $F_i$ is defined as in (\ref{F}).
We continue to assume $\Delta=\{\xi_1,\ldots ,\xi_a\}$ and set $z_{\mathbf{s}}=s_{\Delta}(z_{\mathbf{s},0})$ and $z_{\mathbf{t}}=s_{\Delta}(z_{\mathbf{t},0})$. 
Then, $D_{\mathbf{s}}=G_0(z_{\mathbf{s}})$ (resp. $D_{\mathbf{t}}=G_0(z_{\mathbf{t}})$) is the flag domain in $Z_\mathbf{s}$ (resp. $Z_\mathbf{t}$) determined by 
$$\sgn{(V_i)}=\begin{cases}
  (0,i)&\text{if }i\leq a\\
  (p-a,a)&\text{if }i=a+1.\\
\end{cases}
\quad 
\text{\Bigg(resp. } 
\sgn{(V_i)}=\begin{cases}
  (p-i,q)&\text{if }i\leq a\\
  (p-a,a)&\text{if }i=a+1.\\
\end{cases}
\text{\Bigg)}$$
\begin{Proposition}
  The flag domains $D_{\mathbf{s}}$ and $D_{\mathbf{t}}$ are generically $1$-connected.
\end{Proposition}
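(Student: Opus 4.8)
The plan is to verify condition (3) of Proposition \ref{W_K} for the parabolics attached to $z_{\mathbf{s}}$ and $z_{\mathbf{t}}$, exactly in the spirit of the proofs of Theorem \ref{AIII} and Proposition \ref{CI2}. First I would describe these parabolics. Since $D_{\mathbf{s}}$ (resp. $D_{\mathbf{t}}$) fibers over $D_a$, the parabolic $\frakq_{\mathbf{s}}$ is contained in $\frakq_a$; concretely $\frakq_{\mathbf{s}}=s_{\Delta}(\frakq_{\mathbf{s},0})$, where $\frakq_{\mathbf{s},0}$ stabilizes the refined flag $F_1\subset\cdots\subset F_a\subset F_p$, it contains the Borel attached to $\Psi=\{s_{\Delta}(\psi_1),\dots,s_{\Delta}(\psi_{p+q-1})\}$, and $\Phi=\Psi\setminus\{s_{\Delta}(\psi_1),\dots,s_{\Delta}(\psi_a),s_{\Delta}(\psi_p)\}$; analogously $\frakq_{\mathbf{t}}$ removes $s_{\Delta}(\psi_p),s_{\Delta}(\psi_{p+q-a}),\dots,s_{\Delta}(\psi_{p+q-1})$. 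Using the permutation (\ref{s_D}) I would compute $\Phi^-_{\mr{n}}$. Because $\frakq_{\mathbf{s}}\subset\frakq_a$, the set $\Phi^-_{\mr{n}}$ for $D_{\mathbf{s}}$ is the disjoint union of $\Phi^-_{\mr{n}}$ for $D_a$ and of the roots that become nilpotent when passing from $\frakq_a$ to $\frakq_{\mathbf{s}}$; before applying $s_{\Delta}$ these extra roots lie entirely in $\Sigma_{\mr{c}}$ (they live among the first $p$ coordinates for $\mathbf{s}$, among the last $q$ coordinates for $\mathbf{t}$), so after applying $s_{\Delta}$ one obtains an explicit description of $\Phi^-_{\mr{n}}\cap\Sigma_{\mr{c}}$ and of the enlargement of $\Phi^-_{\mr{n}}\cap\Sigma_{\mr{nc}}$ relative to (\ref{su_root}).

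Next I would pin down $w^K_0$. The key observation is that the Borel of $K$ spanned by $\{-\psi_i\}_{i\ne p}$ — the one used for $D_a$ — is still contained in $\frakq_{\mathbf{s}}$ and in $\frakq_{\mathbf{t}}$; this I would check one simple root at a time via (\ref{s_D}), so that $w^K_0$ is again the permutation (\ref{wK0_}). Since $w^K_0\in W_K$ it preserves $\Sigma_{\mr{c}}$, $\Sigma^+_{\mr{nc}}$ and $\Sigma^-_{\mr{nc}}$ separately, so condition (3) decouples into independent disjointness checks. The $\Sigma_{\mr{c}}$-part is handled by the same mechanism used just before Corollary \ref{herm}: every compact root occurring in $\Phi^-_{\mr{n}}$ is, after undoing $s_{\Delta}$, an ``$e_i-e_j$ with $i<j$'', and $w^K_0$ carries all such to their negatives, hence off $\Phi^-_{\mr{n}}$. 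For the $\Sigma_{\mr{nc}}$-part one is left with two explicit, half-open families of roots indexed by intervals in $\{1,\dots,p+q\}$; applying $w^K_0$ and intersecting reduces the whole statement to a short list of interval inequalities, which hold precisely because $p\le 2a\le q$ (this is where the proof of Theorem \ref{AIII} is re-used, now with the extra roots thrown in).

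The step I expect to be the main obstacle is this last one: controlling the \emph{extra} noncompact roots that appear in $\Phi^-_{\mr{n}}$ for $D_{\mathbf{s}}$, $D_{\mathbf{t}}$ but not for $D_a$. For $D_a$ the disjointness came down cleanly to $p\le 2a\le q$; for the fibered domains one must additionally check that $w^K_0$ carries none of the new noncompact roots back into $\Phi^-_{\mr{n}}$, and getting the index bookkeeping right — in particular matching each half-open block $\{e_i-e_j\}$ with its $w^K_0$-image and seeing that the only possible overlaps are ruled out by the hypotheses $p\le 2a$ and $2a\le q$ — is the delicate part. Once this is in place, $D_{\mathbf{s}}$ and $D_{\mathbf{t}}$ are settled by the same computation, the roles of the first $p$ and of the last $q$ coordinates being interchanged; alternatively, $D_{\mathbf{t}}$ can be deduced from $D_{\mathbf{s}}$ via the evident symmetry between the two refined flags.
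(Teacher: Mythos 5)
Your plan is essentially the paper's own proof: the paper likewise applies criterion (3) of Proposition \ref{W_K} with $\Phi=\Psi\setminus\{s_{\Delta}(\psi_1),\ldots,s_{\Delta}(\psi_a),s_{\Delta}(\psi_p)\}$ (and the mirror set for $\mathbf{t}$), computes via (\ref{s_D}) that $\Phi_{\mathrm{n}}^-\cap\Sigma_{\mathrm{nc}}$ is (\ref{su_root}) together with the extra block $\{e_i-e_j\mid a+1\leq i\leq p,\; p+q-a+1\leq j\leq p+q\}$, uses the containment of the $K$-Borel $\{-\psi_i\}_{i\neq p}$ in $\frakq_{\mathbf{s}}$, $\frakq_{\mathbf{t}}$ to identify $w^K_0$ with (\ref{wK0_}) and to dispose of the compact part, and then verifies the interval disjointness exactly from $p\leq 2a\leq q$. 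Only a wording slip to note: the compact roots of $\Phi_{\mathrm{n}}^-$ are already $K$-negative as they stand (undoing $s_{\Delta}$ is unnecessary and can even make them noncompact), and $w^K_0$ sends them to $K$-positive compact roots, which lie in $\Sigma(\frakq_{\mathbf{s}})$ and hence off $\Phi_{\mathrm{n}}^-$ --- not literally each to its own negative --- which is precisely the mechanism stated before Corollary \ref{herm}.
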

\begin{proof}
  We denote by $\frakq_{\mathbf{s}}$ the parabolic subalgebra at $z_{\mathbf{s}}$.
  Then, $\frakq_{\mathbf{s}}$ contains the Borel subalgebra corresponding to the simple root system $\Psi=\{s_{\Delta}(\psi_i)\}_{1\leq i\leq p+q-1}$, and $\Sigma(\frakq_{\mathbf{s}})=\Phi_{\rm{r}}\cup\Phi_{\rm{n}}^+$ with 
  $\Phi=\Psi\setminus\{s_{\Delta}(\psi_1),\ldots ,s_{\Delta}(\psi_a),s_{\Delta}(\psi_p) \}$.
  By the permutation (\ref{s_D}), we have
  \begin{align*}
    \Phi_{\rm{n}}^-\cap \Sigma_{\rm{nc}}=&\text{(the right hand side of (\ref{su_root}))}\\
    &\cup \{e_i-e_j \mid  a+1\leq i\leq p,\;p+q-a+1\leq j\leq p+q\}.
  \end{align*}
  Moreover, $\frakq_{\mathbf{s}}$ contains the set $\{-\psi_i\}_{i\neq p}$ of simple roots of $\Sigma_{\rm{c}}$, and the longest element is the permutation (\ref{wK0_}).
  Then, we have $w^K_0(\Phi^-_{\rm{n}}\cap\Sigma_{\rm{nc}})\cap (\Phi^-_{\rm{n}}\cap\Sigma_{\rm{nc}})=\emptyset $ and $D_{\mathbf{s}}$ is generically 1-connected.

  For the proof of $D_{\mathbf{t}}$, the set $\Phi$ is replaced by $\Psi\setminus \{s_{\Delta}(\psi_{p+q-1}),\ldots ,s_{\Delta}(\psi_{p+q-a}),s_{\Delta}(\psi_p) \}$.
  Then, $\Phi_{\rm{n}}^-\cap \Sigma_{\rm{nc}}$ is written as
  $$\text{(the right hand side of (\ref{su_root}))}\cup \{e_i-e_j \mid  1\leq i\leq a,\;p+1\leq j\leq p+q-a\}.$$
  Therefore, $D_{\mathbf{t}}$ is generically 1-connected similarly as in the case for $D_{\mathbf{s}}$.
\end{proof}
Let $\mathbf{s'}$ (resp. $\mathbf{t'}$) be a subsequence of $\mathbf{s}$ (resp. $\mathbf{t}$), and we define the subsequences $z_{\mathbf{s'}}$ (resp. $z_{\mathbf{t'}}$) of $z_{\mathbf{s}}$ (resp. $z_{\mathbf{t}}$), as described above.
Then, we have the flag domain $D_{\mathbf{s'}}=G_0(z_{\mathbf{s'}})$ and $D_{\mathbf{t'}}=G_0(z_{\mathbf{t'}})$, which compose the fibration 
$$D_{\mathbf{s}}\rightarrow D_{\mathbf{s'}}\rightarrow D_a \leftarrow D_{\mathbf{t'}} \leftarrow D_{\mathbf{t}}$$ 
Because the parabolic subalgebra at $z_{\mathbf{s'}}$ (resp. $z_{\mathbf{t'}}$) contains $\frakq_{\mathbf{s}}$ (resp. $\frakq_{\mathbf{t}}$), the above proposition and Proposition \ref{W_K} imply the following corollary:
\begin{Corollary}
  The flag domains $D_{\mathbf{s'}}$ and $D_{\mathbf{t'}}$ are generically $1$-connected.
\end{Corollary}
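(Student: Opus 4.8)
The plan is to deduce the generic $1$-connectivity of $D_{\mathbf{s'}}$ and $D_{\mathbf{t'}}$ from that of $D_{\mathbf{s}}$ and $D_{\mathbf{t}}$, established just above, by a monotonicity argument resting on the criterion of Proposition~\ref{W_K}(3). Since $z_{\mathbf{s'}}$ arises from the flag $z_{\mathbf{s}}$ by forgetting some of its members, its stabilizer in $G$ is larger, whence $\frakq_{\mathbf{s}}\subseteq\frakq_{\mathbf{s'}}$ (the inclusion already noted above), and both parabolic subalgebras contain the same Borel subalgebra, the one attached to $\Psi=\{s_{\Delta}(\psi_i)\}$. Writing $\Sigma(\frakq_{\mathbf{s'}})=\Phi'_{\mr r}\cup(\Phi')^{+}_{\mr n}$ with $\Phi\subseteq\Phi'\subseteq\Psi$, the Levi part only grows when we pass to the sub-flag, so $(\Phi')^{-}_{\mr n}\subseteq\Phi^{-}_{\mr n}$. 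The same discussion applies verbatim to $D_{\mathbf{t'}}$ with $\mathbf{t}$, $\frakq_{\mathbf{t}}$ in place of $\mathbf{s}$, $\frakq_{\mathbf{s}}$.

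Next I would check that the longest element $w_0^{K}$ relevant to $D_{\mathbf{s'}}$ is literally the same permutation~(\ref{wK0_}) as for $D_{\mathbf{s}}$. Here $\frakh_0$ is a compact Cartan subalgebra of $\mathfrak{su}(p,q)$, so $W_K\subseteq W$ and $W_K$ is the Weyl group of $\Sigma_{\mr c}$; since $\frakq_{\mathbf{s'}}\supseteq\frakq_{\mathbf{s}}$ still contains the simple system $\{-\psi_i\}_{i\neq p}$ of $\Sigma_{\mr c}$, the Borel subgroup of $K\cap Q_{z_{\mathbf{s'}}}$ is cut out by the same positive system of $\Sigma_{\mr c}$, so $w_0^{K}$ is unchanged. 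By the preceding proposition $D_{\mathbf{s}}$ is generically $1$-connected, so Proposition~\ref{W_K}(3) gives $w_0^{K}(\Phi^{-}_{\mr n})\cap\Phi^{-}_{\mr n}=\emptyset$. Intersecting both sides with the smaller set $(\Phi')^{-}_{\mr n}\subseteq\Phi^{-}_{\mr n}$ preserves emptiness, i.e. $w_0^{K}\bigl((\Phi')^{-}_{\mr n}\bigr)\cap(\Phi')^{-}_{\mr n}\subseteq w_0^{K}(\Phi^{-}_{\mr n})\cap\Phi^{-}_{\mr n}=\emptyset$, and Proposition~\ref{W_K}(3) then yields that $D_{\mathbf{s'}}$ is generically $1$-connected; $D_{\mathbf{t'}}$ is treated identically.

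The step I expect to need the most care is the translation of $\frakq_{\mathbf{s}}\subseteq\frakq_{\mathbf{s'}}$ into the two root-theoretic facts used above: the inclusion $(\Phi')^{-}_{\mr n}\subseteq\Phi^{-}_{\mr n}$ and the equality of the two longest elements $w_0^{K}$. Both hinge on the fact that forgetting flag members enlarges $\Phi\subseteq\Psi$ with respect to the \emph{same} ambient Borel subalgebra and the \emph{same} maximally compact Cartan data, which in turn reflects the compatibility of the $G_0$-equivariant tower $D_{\mathbf{s}}\to D_{\mathbf{s'}}\to D_a$ of flag bundles with the base cycles. Once this is pinned down, the conclusion is a purely formal consequence of Proposition~\ref{W_K}, requiring no further computation.
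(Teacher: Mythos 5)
Your argument is correct and is exactly the paper's (one-line) proof spelled out: the inclusion $\frakq_{\mathbf{s}}\subseteq\frakq_{\mathbf{s'}}$ (resp. $\frakq_{\mathbf{t}}\subseteq\frakq_{\mathbf{t'}}$) with the common Borel gives $(\Phi')^-_{\mathrm{n}}\subseteq\Phi^-_{\mathrm{n}}$ and the same $w_0^K$, so criterion (3) of Proposition \ref{W_K} is inherited from the preceding proposition. No gaps; this matches the paper's reasoning.
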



\end{document}